\documentclass{article}

% Set left margin - The default is 1 inch, so the following 
% command sets a 1.25-inch left margin.
\setlength{\oddsidemargin}{0.25in}
% Set right margin - The default is 1 inch, so the following 
% command sets a 1.25-inch left margin.
\setlength{\evensidemargin}{0.25in}
% Set width of the text - What is left will be the right margin.
% In this case, right margin is 8.5in - 1.25in - 6in = 1.25in.
\setlength{\textwidth}{6in}
% Set top margin - The default is 1 inch, so the following 
% command sets a 0.75-inch top margin.
\setlength{\topmargin}{-0.25in}
% Set height of the text - What is left will be the bottom margin.
% In this case, bottom margin is 11in - 0.75in - 9.5in = 0.75in
\setlength{\textheight}{8in}

 % Lines are 1.5 spaced (as opposed to single or double spaced).

\usepackage{amsmath}
\usepackage{amssymb}
\usepackage{graphicx} 
\usepackage{amsthm}

\newtheorem{theorem}{Theorem}

\newtheorem{lemma}[theorem]{Lemma}
\newtheorem{corollary}[theorem]{Corollary}

\def\lv #1{\mathbb #1}

% Set the beginning of a LaTeX document
\begin{document}
\title{Torsion of Elliptic Curves Over Quadratic Fields}         % Enter your title between curly braces
\author{Jody Ryker, Sophie De Arment}        % Enter your name between curly braces
\date{\today}

\maketitle

\begin{center}
\textbf{Abstract}
\end{center}

By focusing on the family $E:y^2=x^3+a$, we present strategies for determining the structure of the torsion
subgroup of the Mordell-Weil group of an elliptic curve, $E(K)$, over
quadratic field $K$. Generalizations of the Nagell-Lutz
theorem and Mazur's theorem to curves defined over quadratic fields allows us to determine the full torsion subgroup of
$E(K)$ as one of at most three possibilities when $a$ is a square.

\section{Introduction}

The structure of the torsion subgroup of an elliptic curve over $\lv{Q}$ is well understood. Mordell's theorem states that the set of rational torsion points on $E$ is a finitely-generated abelian group. As a result, there are finitely many points of rational torsion on $E$. Further, Mazur's Theorem describes the possible structures of $E(\lv{Q})_{tors}$. Finally, we can use Nagell-Lutz's theorem to compute all the rational torsion points of a given elliptic curve \cite{st}.
So, the Mordell, Mazur, and Nagell-Lutz theorems provide a complete description of the torsion subgroup of any elliptic curve over $\lv{Q}$. We would like to have a similar description for the torsion subgroup of elliptic curves over quadratic fields.
An extension of Mordell's theorem, shows that $E(K)_{tors}$, where $K$ is a quadratic field, is also a finitely generated abelian group \cite{st}. The following theorem of Kamienny, Kenku, and Momose lists the 26 possibilities for the structure of $E(K)_{tors}$.
\begin{theorem}\label{kkm} (Kamienny \cite{k}, Kenku and Momose \cite{km})
Let $K$ be a quadratic field and $E$ an elliptic curve over $K$. Then the torsion subgroup $E(K)_{tors}$ of $E(K)$ is isomorphic to one of the following 26 groups:
$$\mathbb{Z} / m\mathbb{Z}, \text{ for } 1 \leq m \leq18, m \neq 17,$$
$$\mathbb{Z} / 2\mathbb{Z} \oplus \mathbb{Z} / 2m\mathbb{Z}, \text{ for } 1 \leq m \leq 6,$$
$$\mathbb{Z} / 3\mathbb{Z} \oplus \mathbb{Z} / 2m\mathbb{Z}, \text{ for } m=1,2$$
$$\mathbb{Z} / 4\mathbb{Z} \oplus \mathbb{Z} / 4\mathbb{Z}.$$
\end{theorem}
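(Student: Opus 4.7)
The plan is to follow the Mazur--Kamienny modular approach. A pair $(E,P)$, where $E$ is an elliptic curve over the quadratic field $K$ and $P\in E(K)$ has exact order $N$, corresponds to a non-cuspidal $K$-point on the modular curve $Y_1(N)$. So the theorem is equivalent to the assertion that, for each $N$ not arising from the listed groups, $X_1(N)$ has no non-cuspidal $K$-rational points for any quadratic $K$. Since $[K:\lv Q]=2$, I would rephrase this as a statement about degree-$2$ points on $X_1(N)/\lv Q$, or equivalently about $\lv Q$-points on the symmetric square $X_1(N)^{(2)}$.

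First I would reduce the classification to a bound on the primes $p$ dividing $|E(K)_{tors}|$. If $p$ divides the torsion order, then $X_1(p)$ acquires a $\lv Q$-point on $X_1(p)^{(2)}$. The key input is Kamienny's criterion: the composition of the Abel--Jacobi map $X_1(p)^{(2)}\to J_1(p)$ with the projection onto a suitable Eisenstein quotient $\tilde J$ is a formal immersion at the cuspidal fiber modulo some prime $\ell\neq p$. Since $\tilde J(\lv Q)$ is finite (the Eisenstein quotient has Mordell--Weil rank zero, by Mazur's theory), any such degree-$2$ point must reduce to the cusp in the $\ell$-adic formal group, forcing it to equal a cusp globally. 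This argument bounds $p\leq 13$. Next, I would bound the $p$-adic valuation of $|E(K)_{tors}|$ by the same technique applied to $X_1(p^k)$, and control composite levels through the natural maps $X_1(N)\to X_0(N)$, using the Kenku--Momose analysis of $\lv Q$-rational isogenies over quadratic fields.

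Finally, I would treat the remaining small levels individually: for each $N$ where $X_1(N)$ has low genus, explicit models allow enumeration of all quadratic points, pinned down by Weil-pairing constraints and the known subgroup structure over $\lv Q$. Assembling these cases yields exactly the $26$ groups listed. The principal obstacle is the formal-immersion step: verifying injectivity of the relevant differential requires a detailed analysis of Hecke operators on modular symbols, and the rank-zero conclusion for $\tilde J(\lv Q)$ rests on the full strength of Mazur's work on the Eisenstein ideal. A self-contained proof at the level of the present paper is therefore out of reach, and we would import this theorem as a black box, using it only to pin down the torsion structure for our family $y^2=x^3+a$.
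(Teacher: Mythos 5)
The paper does not prove this theorem at all: it is imported verbatim as a result of Kamienny and of Kenku--Momose, exactly as you conclude should be done. Your sketch of the underlying machinery (quadratic torsion points as degree-$2$ points on $X_1(N)$, the formal-immersion criterion into an Eisenstein quotient of rank zero, the prime bound $p\leq 13$, and case analysis at small composite levels) is a fair description of how those cited papers actually proceed, so treating the statement as a black box is consistent with the paper's own use of it.
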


In this paper, we concentrate on more specifically characterizing $E(K)_{tors}$ for families of curves. In particular, we can pare down the list of 26 possibilities to at most three for curves of the form $y^2=x^3+a$, where $a$ is a square.
We also present strategies for describing $E(K)_{tors}$ for other families of elliptic curves. We generalize Nagell-Lutz's theorem to determine where 2-torsion occurs. We also describe a method for finding 3-torsion. Using this information, we can more specifically describe the possibilities for $E(K)_{tors}$. Next, we compare curves with parameterizations given in Rabarison's work (\cite{fpr}) for curves having certain torsion structures. Finally, we consider torsion structures that we know only occur over quadratic cyclotomic fields (\cite{fn}, \cite{fr}).

In Section \ref{smalltors}, we will present methods for finding 2-torsion and 3-torsin points. In Section \ref{proof}, we will prove our main result:
\begin{theorem}\label{result}
Let $E(K):y^2=x^3+a,$ where $a$ is an integer and $K$ is any quadratic field.

\renewcommand{\theenumi}{\roman{enumi}}
\begin{enumerate}
\item Suppose $a$ is a sixth power integer.

\noindent If $K\neq \lv{Q}(\sqrt{-3})$, then $E(K)_{tors}$ is isomorphic to $ \lv{Z}/6\lv{Z}, \lv{Z}/12\lv{Z},$ or $\lv{Z}/18\lv{Z}$.
\newline\noindent If $K= \lv{Q}(\sqrt{-3})$, then $E(K)_{tors}$ is isomorphic to $\lv{Z}/2\lv{Z}\oplus\lv{Z}/6\lv{Z}$.
\item Suppose $a$ is a square but not a sixth power, and $K\neq\lv{Q}(\sqrt{-3})$. Then $E(K)_{tors}$ is isomorphic to $\lv{Z}/3\lv{Z}$ or $\lv{Z}/9\lv{Z}$.
\newline\noindent If $K=\lv{Q}(\sqrt{-3})$, then $E(K)_{tors}$ is isomorphic to $\lv{Z}/3\lv{Z},\lv{Z}/9\lv{Z},$ or $\lv{Z}/3\lv{Z}\oplus\lv{Z}/3\lv{Z}$.

\end{enumerate}
\end{theorem}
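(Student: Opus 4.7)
My plan is to handle the two cases separately, exploiting the special form $y^2 = x^3 + a$.

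For case (i), when $a = c^6$, the map $(x, y) \mapsto (c^{-2} x, c^{-3} y)$ is a $\mathbb{Q}$-isomorphism $E \to E_1 \colon Y^2 = X^3 + 1$, so it suffices to determine $E_1(K)_{tors}$. A direct computation gives $E_1(\mathbb{Q})_{tors} \cong \mathbb{Z}/6\mathbb{Z}$, with rational 2-torsion $(-1, 0)$ and rational 3-torsion $(0, \pm 1)$. The factorization $X^3 + 1 = (X+1)(X^2 - X + 1)$ shows that the remaining 2-torsion is defined exactly over $\mathbb{Q}(\sqrt{-3})$, while the 3-division polynomial $\psi_3(X) = 3X(X^3 + 4)$ has nonzero roots generating a cubic extension, so no additional 3-torsion appears in any quadratic $K$. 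Combined with Theorem~\ref{kkm}, the only KKM groups containing $\mathbb{Z}/6\mathbb{Z}$ with cyclic $2$-Sylow and $3$-Sylow subgroups are $\mathbb{Z}/6\mathbb{Z}, \mathbb{Z}/12\mathbb{Z}$, and $\mathbb{Z}/18\mathbb{Z}$, handling the case $K \neq \mathbb{Q}(\sqrt{-3})$. When $K = \mathbb{Q}(\sqrt{-3})$, the full 2-torsion gives $\mathbb{Z}/2\mathbb{Z} \oplus \mathbb{Z}/6\mathbb{Z} \subseteq E_1(K)_{tors}$; the only KKM group strictly containing this is $\mathbb{Z}/2\mathbb{Z} \oplus \mathbb{Z}/12\mathbb{Z}$, which I would rule out by inspecting $\psi_4(X)/\psi_2(X) = 2(X^6 + 20X^3 - 8)$. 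Setting $t = X^3$ reduces to $t = -10 \pm 6\sqrt{3}$, and since $\sqrt{3} \notin \mathbb{Q}(\sqrt{-3})$, no 4-torsion $X$-coordinate lies in $\mathbb{Q}(\sqrt{-3})$.

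For case (ii), write $a = b^2$ with $b$ not a cube in $\mathbb{Q}$. The point $(0, b)$ has order $3$, since its tangent is horizontal and the duplication formula gives $2(0, b) = (0, -b)$. Because $b$ is not a cube, $x^3 + b^2$ is irreducible over $\mathbb{Q}$ and its roots lie in a cubic extension; hence $E$ acquires no 2-torsion over any quadratic $K$, so $|E(K)_{tors}|$ is odd and divisible by $3$. Restricting the KKM list to odd-order groups containing $\mathbb{Z}/3\mathbb{Z}$ leaves only $\mathbb{Z}/3\mathbb{Z}, \mathbb{Z}/9\mathbb{Z}, \mathbb{Z}/15\mathbb{Z}$, and $\mathbb{Z}/3\mathbb{Z} \oplus \mathbb{Z}/3\mathbb{Z}$. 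The Weil pairing forces $\zeta_3 \in K$ whenever $E[3] \subseteq E(K)$, so $\mathbb{Z}/3\mathbb{Z} \oplus \mathbb{Z}/3\mathbb{Z}$ can occur only when $K = \mathbb{Q}(\sqrt{-3})$.

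The crux of the argument is ruling out $\mathbb{Z}/15\mathbb{Z}$, i.e., showing that no 5-torsion exists on $E \colon y^2 = x^3 + B$ over any quadratic $K$. Running the division-polynomial recursion, $\psi_5(x)$ depends on $x$ only through $u = x^3$, and the substitution $u = Bv$ gives $\psi_5 = B^4 P_1(v)$ with
\[
P_1(v) = 5v^4 + 380v^3 - 240v^2 - 1600v - 256
\]
independent of $B$. I would verify that $P_1$ is irreducible over $\mathbb{Q}$; for instance, $P_1 \bmod 7$ has no root and does not factor as a product of two quadratics, forcing irreducibility mod $7$ and hence over $\mathbb{Q}$. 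Thus no root of $P_1$ lies in any quadratic field, and since $B \in \mathbb{Q}$ the same is true of $u = Bv$, so no 5-torsion $x$-coordinate lies in $K$. Combining this with the preceding reductions yields the classification stated in the theorem.
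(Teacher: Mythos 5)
Your argument is correct, and it reaches the classification by a genuinely more self-contained route than the paper. The skeleton is the same — exhibit the rational $3$-torsion $(0,\pm b^3)$ resp.\ $(0,\pm b)$, locate the $2$-torsion via the roots of $x^3+a$, and then whittle down the Kamienny--Kenku--Momose list — but where the paper appeals to Najman's classification of torsion over the quadratic cyclotomic fields (to exclude $\lv{Z}/3\lv{Z}\oplus\lv{Z}/6\lv{Z}$ away from $\lv{Q}(\sqrt{-3})$, to exclude $\lv{Z}/2\lv{Z}\oplus\lv{Z}/12\lv{Z}$ over $\lv{Q}(\sqrt{-3})$, to restrict $\lv{Z}/3\lv{Z}\oplus\lv{Z}/3\lv{Z}$ to $\lv{Q}(\sqrt{-3})$, and to exclude $\lv{Z}/15\lv{Z}$ over $\lv{Q}(\sqrt{-3})$), you replace each citation with a direct division-polynomial or Weil-pairing argument: $\psi_3=3x(x^3+4)$ kills full $3$-torsion for the sixth-power case, $\psi_4/\psi_2=2(x^6+20x^3-8)$ with $x^3=-10\pm 6\sqrt{3}\notin\lv{Q}(\sqrt{-3})$ kills $4$-torsion over $\lv{Q}(\sqrt{-3})$, and the Weil pairing confines $\lv{Z}/3\lv{Z}\oplus\lv{Z}/3\lv{Z}$ to $\lv{Q}(\sqrt{-3})$. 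Your treatment of $\lv{Z}/15\lv{Z}$ deserves particular mention: I verified that for $y^2=x^3+B$ one indeed has $\psi_5=5u^4+380Bu^3-240B^2u^2-1600B^3u-256B^4$ with $u=x^3$, so that $u=Bv$ gives exactly your $P_1(v)=5v^4+380v^3-240v^2-1600v-256$, and $P_1$ is irreducible (mod $7$ it has no root and no factorization into two quadratics, as you propose to check), so no $5$-torsion $x$-coordinate lies in any quadratic field. This is stronger than the paper's own step, which only cites the non-occurrence of $\lv{Z}/15\lv{Z}$ over $\lv{Q}(\sqrt{-3})$ and leaves its exclusion over other quadratic fields implicit; your uniform $\psi_5$ computation closes that off for every quadratic $K$ and every $B\in\lv{Q}^{\times}$. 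The trade-off is the usual one: the paper's proof is shorter by leaning on known classification theorems, while yours costs a few explicit polynomial computations but is elementary, checks the specific family directly, and does not depend on results beyond Theorem \ref{kkm}.
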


We will also describe the torsion structure of a particular curve using Theorem \ref{result}.

\begin{corollary}
Let $E(K):y^2=x^3+1,$ where $K$ is any quadratic field. Then $E(K)_{tors}$ is isomorphic to $\lv{Z}/6\lv{Z}$, $\lv{Z}/18\lv{Z}$, or $\lv{Z}/2\lv{Z}\oplus\lv{Z}/6\lv{Z}$.
\end{corollary}

\section{2-Torsion and 3-Torsion}\label{smalltors}

Let $E(K): \, y^2 =x^3+bx+a$ be an elliptic curve with $a, b \in K$ with $K$ quadratic. We will first consider over which quadratic fields 2-torsion and 3-torsion occur in order to more precisely describe $E(K)_{tors}$.
\begin{lemma} \label{2tors}
A non-trivial point $(x,y)$ on a curve $E(K): y^2=x^3+bx+a$, where $a, b \in K$, is a point of order two if and only if $x\in K$ satisfies $x^3+bx+a=0$.
\end{lemma}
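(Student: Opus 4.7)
The plan is to unwind the definition of a 2-torsion point in terms of the group law on $E$. A non-trivial point $P=(x,y)$ on $E(K)$ has order two precisely when $2P=O$, the point at infinity, equivalently when $P=-P$. So the first step is simply to recall that on a short Weierstrass curve $y^2=x^3+bx+a$, the inverse of a point $(x,y)$ is $(x,-y)$; this is immediate from the fact that the line $X=x$ meets the curve in $(x,y)$, $(x,-y)$, and the point at infinity, and the group law defines inverses via this vertical line.

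Next I would chase the equivalence in both directions. For the forward direction, assume $P=(x,y)\in E(K)$ is non-trivial of order two. Then $(x,y)=(x,-y)$, so $y=-y$, giving $2y=0$ and hence $y=0$ (the characteristic is zero since $K$ is a number field). Substituting into the curve equation yields $0=x^3+bx+a$, and by hypothesis $x\in K$. Conversely, if $x\in K$ satisfies $x^3+bx+a=0$, then $(x,0)$ lies on $E(K)$, is non-trivial (it is an affine point, not $O$), and equals its own inverse, so it has order dividing two; since it is non-trivial, its order is exactly two.

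I do not expect any real obstacle here: the argument is a one-line unpacking of the group law together with the observation that $y=0$ forces $x$ to be a root of the cubic. The only care needed is to separate the ``non-trivial'' hypothesis (so that we may speak of an affine point and exclude the identity) and to note that we are working in characteristic zero so that $2y=0$ genuinely implies $y=0$. I would keep the write-up to a short paragraph mirroring the two implications above.
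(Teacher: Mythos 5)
Your proof is correct, and it reaches the same key criterion as the paper --- an affine point has order two exactly when $y=0$ --- but by a different route: the paper simply cites this fact (attributing it to the Nagell--Lutz theorem as presented in Silverman--Tate) and stops, whereas you derive it directly from the group law, observing that $-(x,y)=(x,-y)$, so $P=-P$ forces $2y=0$ and hence $y=0$ in characteristic zero, and then you check both implications including that $(x,0)$ with $x\in K$ a root of the cubic really is a non-trivial point of order exactly two. Your version is the more self-contained and, frankly, the more accurately sourced one: the $y=0$ criterion is a consequence of the group law (vertical lines through a point and its inverse), not of Nagell--Lutz proper, which concerns integrality and discriminant divisibility of torsion points over $\mathbb{Q}$; the paper's one-line citation also omits the converse direction that you spell out. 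The only thing to keep an eye on is the phrase ``non-trivial,'' which you handle correctly by excluding $\mathcal{O}$ before speaking of affine coordinates.
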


\begin{proof}
Proof: From Nagell-Lutz's theorem we know that a point $(x,y)\neq \mathcal{O}$ on $E$ is a point of order two if and only if $y=0$ \cite{st}.
\begin{flushright}
 $\blacksquare$
\end{flushright}

\end{proof}

 In other words, we factor $x^3+bx+a$ to identify the fields over which $E$ has 2-torsion. In order for $E(K)_{tors}$ to contain a 2-torsion point, there would necessarily be an element $x\in K$ such that $x$ satisfiess $x^3+bx+a=0$.

\begin{lemma}\label{3tors} \noindent\renewcommand{\theenumi}{\roman{enumi}}
\begin{enumerate}
\item Let $E(K): y^2=x^3+bx+a$, where $a, b \in K$. A point $(x,y) \in E(K)$ is a point of order three if and only if $x\in K$ is satisfies $3x^4+6bx^2+12ax-b^2=0$.
\item Let $E(K): y^2=x^3+a$, where $a\in K$. A point $(x,y) \in E(K)$ is a point of order three if and only if $x\in K$ satisfies $3x^4+12ax=0$. When $a$ is a square, there will always be a point of order three on $E(\lv{Q})$.
\end{enumerate}
\end{lemma}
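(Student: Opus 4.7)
The plan is to characterize $3$-torsion via the identity $2P = -P$ combined with the standard duplication formula for the $x$-coordinate of $2P$. A point of order $3$ must have $y \neq 0$, for otherwise Lemma~\ref{2tors} would identify it as $2$-torsion; thus at $P = (x,y)$ the doubling formula
$$x(2P) = \left(\frac{3x^2 + b}{2y}\right)^2 - 2x$$
is well-defined. Equating $x(2P)$ with $x(-P) = x$ and clearing the denominator using the curve equation $y^2 = x^3 + bx + a$ gives $(3x^2 + b)^2 = 12x(x^3 + bx + a)$, which expands to the quartic $3x^4 + 6bx^2 + 12ax - b^2 = 0$. This establishes the forward direction of (i).

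For the converse in (i), I would run the computation in reverse: if $(x,y) \in E(K)$ satisfies the quartic, then the same manipulations yield $x(2P) = x = x(-P)$, so $2P = \pm P$. The case $2P = P$ forces $P = \mathcal{O}$, contradicting that we have an affine point, so $2P = -P$ and hence $3P = \mathcal{O}$. Since $P \neq \mathcal{O}$ and $y \neq 0$, the order is exactly $3$. Part (ii) is then immediate from (i) by specializing to $b = 0$, which reduces the quartic to $3x(x^3 + 4a) = 0$. For the final assertion, when $a$ is a rational square, say $a = c^2$, the root $x = 0$ lies in $\lv{Q}$ and the curve equation at $x = 0$ gives $y = \pm c \in \lv{Q}$, so $(0, c) \in E(\lv{Q})$ is a $3$-torsion point.

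The only real obstacle is the algebraic expansion of $(3x^2 + b)^2 = 12x(x^3 + bx + a)$ into standard form and the bookkeeping needed to confirm that the converse direction introduces no spurious solutions; both are mechanical once set up. The key conceptual point is simply that for an affine non-identity point the condition $3P = \mathcal{O}$ is equivalent to $2P$ and $P$ sharing an $x$-coordinate, so the duplication formula directly produces the $3$-division polynomial.
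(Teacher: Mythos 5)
Your argument is correct, but it takes a genuinely different route from the paper. The paper characterizes points of order three geometrically as inflection points of the curve and obtains the quartic $3x^4+6bx^2+12ax-b^2=0$ by implicit second differentiation, whereas you derive the same quartic (the $3$-division polynomial) algebraically from the duplication formula by imposing $x(2P)=x(-P)$. Your route has the advantage of making both directions of the equivalence explicit: the paper's inflection-point argument really only exhibits the necessary condition on $x$, while your reverse computation, together with the observation that $2P=P$ is impossible for an affine point, gives the converse as well. One small refinement you should add: in the converse you apply the duplication formula before knowing $y\neq 0$. This is harmless, because writing $f(x)=x^3+bx+a$ one has the identity $3x^4+6bx^2+12ax-b^2 = 12x\,f(x)-\bigl(f'(x)\bigr)^2$, so a root of the quartic with $y=0$ (i.e.\ $f(x)=0$) would force $f'(x)=0$, making $x$ a double root of $f$ and the curve singular; since $E$ is an elliptic curve this cannot occur, so $y\neq 0$ is automatic and the duplication formula is legitimate. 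With that remark your proof is complete, and its specialization $b=0$ together with the point $(0,\pm c)$ for $a=c^2$ handles part (ii) exactly as the paper does.
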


\begin{proof}
\renewcommand{\theenumi}{\roman{enumi}}
Proof: \begin{enumerate}
\item First recall that points of order three are points of inflection of $E$ \cite{st}. We take the second derivative of $E(K)$, and find that the $x$-coordinate a point of order three must be a root of 
\begin{equation}
3x^4+6bx^2+12ax-b^2=0.
\end{equation}

\item If $b=0$, Equation (1)
simplifies to 
\begin{equation}
3x^4+12ax=0.
\end{equation}
The inflection points occur at 
\[x=0,\, x=-\sqrt[3]{4a},\, x=-\frac{\sqrt[3]{4a}(1-\sqrt{3})}{2}, \text{ and } x=-\frac{\sqrt[3]{4a}(1+\sqrt{3})}{2}. \]

Since $x=0$ results in a 3-torsion point, there will be a point of order three, namely $(0,\pm\sqrt{a})$. If $a$ is a square, this point is in $\lv{Q}$. 
\begin{flushright}
$\blacksquare$
\end{flushright}

\end{enumerate}
\end{proof}

 %We are interested in instances where $4a$ is a cube, as in these cases $K$ will have additional 3-torsion compared to $\lv{Q}$.

\section{Torsion Over Quadratic Fields}\label{proof}
\setcounter{theorem}{1}
\begin{theorem}
Let $E(K):y^2=x^3+a,$ where $a$ is an integer and $K$ is any quadratic field.

\renewcommand{\theenumi}{\roman{enumi}}
\begin{enumerate}
\item Suppose $a$ is a sixth power integer.

\noindent If $K\neq \lv{Q}(\sqrt{-3})$, then $E(K)_{tors}$ is isomorphic to $ \lv{Z}/6\lv{Z}, \lv{Z}/12\lv{Z},$ or $\lv{Z}/18\lv{Z}$.
\newline\noindent If $K= \lv{Q}(\sqrt{-3})$, then $E(K)_{tors}$ is isomorphic to $\lv{Z}/2\lv{Z}\oplus\lv{Z}/6\lv{Z}$.
\item Suppose $a$ is a square but not a sixth power, and $K\neq\lv{Q}(\sqrt{-3})$. Then $E(K)_{tors}$ is isomorphic to $\lv{Z}/3\lv{Z}$ or $\lv{Z}/9\lv{Z}$.
\newline\noindent If $K=\lv{Q}(\sqrt{-3})$, then $E(K)_{tors}$ is isomorphic to $\lv{Z}/3\lv{Z},\lv{Z}/9\lv{Z},$ or $\lv{Z}/3\lv{Z}\oplus\lv{Z}/3\lv{Z}$.

\end{enumerate}
\end{theorem}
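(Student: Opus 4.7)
The plan is to identify which specific 2- and 3-torsion points of $E$ are defined over $K$ using Lemmas~\ref{2tors} and~\ref{3tors}, producing explicit subgroup inclusions $G\subseteq E(K)_{tors}$, and then to walk through Theorem~\ref{kkm} and retain only those candidates compatible with $G$ and with the shape of the 2- and 3-torsion found. In case (i) the substitution $(x,y)\mapsto(c^2 x,c^3 y)$ is a $\mathbb{Q}$-isomorphism from $E_0:y^2=x^3+1$ to $E:y^2=x^3+c^6$, so one may work with $E_0$ throughout.

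For $E_0$, Lemma~\ref{2tors} gives the 2-torsion $x$-coordinates as the cube roots of $-1$: namely $-1$, which is always in $K$, and $-\omega,-\omega^2$ (with $\omega$ a primitive cube root of unity), which lie in $K$ iff $K=\mathbb{Q}(\sqrt{-3})$. Lemma~\ref{3tors} gives the 3-torsion equation $x(x^3+4)=0$; the cubic $x^3+4$ is irreducible over $\mathbb{Q}$ (no rational root), so its roots lie in no quadratic field, and the only 3-torsion in $E_0(K)$ comes from $(0,\pm 1)$. Thus $\mathbb{Z}/6\mathbb{Z}\subseteq E_0(K)_{tors}$ for every quadratic $K$, with equality on 2-torsion unless $K=\mathbb{Q}(\sqrt{-3})$, in which case $\mathbb{Z}/2\mathbb{Z}\oplus\mathbb{Z}/6\mathbb{Z}\subseteq E_0(K)_{tors}$. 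For case (ii), writing $a=c^2$ with $c$ not a cube, $x^3+c^2$ has no rational root and is therefore irreducible over $\mathbb{Q}$, so $E(K)$ has no 2-torsion over any quadratic $K$; the point $(0,\pm c)$ gives $\mathbb{Z}/3\mathbb{Z}\subseteq E(K)_{tors}$, and the three other 3-torsion points satisfy $x^3=-4c^2$ with $y=\pm c\sqrt{-3}$, so they enter $E(K)$ precisely when $-4c^2$ is a rational cube and $K=\mathbb{Q}(\sqrt{-3})$, in which case $E(K)[3]=\mathbb{Z}/3\mathbb{Z}\oplus\mathbb{Z}/3\mathbb{Z}$.

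Feeding these inclusions into Theorem~\ref{kkm} eliminates most candidates at once. In case (i) with $K\neq\mathbb{Q}(\sqrt{-3})$, the requirement that $E(K)_{tors}$ contains $\mathbb{Z}/6\mathbb{Z}$ but has neither full 2-torsion nor full 3-torsion leaves only the cyclic groups $\mathbb{Z}/6\mathbb{Z},\mathbb{Z}/12\mathbb{Z},\mathbb{Z}/18\mathbb{Z}$. In case (i) with $K=\mathbb{Q}(\sqrt{-3})$, the groups in Theorem~\ref{kkm} containing $\mathbb{Z}/2\mathbb{Z}\oplus\mathbb{Z}/6\mathbb{Z}$ with 3-torsion subgroup of order $3$ are exactly $\mathbb{Z}/2\mathbb{Z}\oplus\mathbb{Z}/6\mathbb{Z}$ and $\mathbb{Z}/2\mathbb{Z}\oplus\mathbb{Z}/12\mathbb{Z}$. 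In case (ii) the absence of 2-torsion forces $|E(K)_{tors}|$ odd, and combined with $\mathbb{Z}/3\mathbb{Z}\subseteq E(K)_{tors}$ this leaves $\mathbb{Z}/3\mathbb{Z},\mathbb{Z}/9\mathbb{Z},\mathbb{Z}/15\mathbb{Z}$, together with $\mathbb{Z}/3\mathbb{Z}\oplus\mathbb{Z}/3\mathbb{Z}$ in the subcase $K=\mathbb{Q}(\sqrt{-3})$ by the 3-torsion analysis above.

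The hard part will be eliminating the two ``impostor'' candidates $\mathbb{Z}/2\mathbb{Z}\oplus\mathbb{Z}/12\mathbb{Z}$ and $\mathbb{Z}/15\mathbb{Z}$ that survive the 2- and 3-torsion filter. For the first, I would compute the 4-division polynomial of $E_0$, namely $\psi_4=4y(x^6+20x^3-8)$; substituting $u=x^3$ into $x^6+20x^3-8=0$ gives $u^2+20u-8=0$ with roots $u=-10\pm 6\sqrt{3}$, and since $-10\pm 6\sqrt{3}\in\mathbb{Q}(\sqrt{3})\setminus\mathbb{Q}$ while $\mathbb{Q}(\sqrt{3})\cap\mathbb{Q}(\sqrt{-3})=\mathbb{Q}$, no primitive 4-torsion $x$-coordinate of $E_0$ can lie in $\mathbb{Q}(\sqrt{-3})$, ruling out $\mathbb{Z}/2\mathbb{Z}\oplus\mathbb{Z}/12\mathbb{Z}$. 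For the second, I would reduce $E$ modulo a rational prime $p\equiv 2\pmod 3$ of good reduction with $p\neq 5$ and $p\not\equiv 4\pmod 5$: because $E$ has complex multiplication by $\mathbb{Z}[\omega]$ and $p$ is inert in $\mathbb{Q}(\sqrt{-3})$, the trace vanishes and $\#\widetilde{E}(\mathbb{F}_p)=p+1$, so for $p=11$ this equals $12$, which is coprime to $5$. The standard injection of the 5-torsion of $E(K)$ into $\widetilde{E}(\mathbb{F}_\mathfrak{p})$ for $\mathfrak{p}$ above $p$ then forces $E(K)[5]=0$, eliminating $\mathbb{Z}/15\mathbb{Z}$; primes dividing $a$ are handled by choosing another prime from the infinite supply of admissible $p$.
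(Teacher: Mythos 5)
Your proposal is correct, and it opens exactly as the paper does: the explicit $2$- and $3$-torsion computations (Lemmas~\ref{2tors} and~\ref{3tors}) pin down $E(K)[2]$ and force $\mathbb{Z}/6\mathbb{Z}$ (case (i)) or $\mathbb{Z}/3\mathbb{Z}$ (case (ii)) inside $E(K)_{tors}$, after which Theorem~\ref{kkm} leaves the same short candidate lists. Where you genuinely diverge is in how the surviving candidates are eliminated. The paper quotes Najman's classification results to discard $\mathbb{Z}/3\mathbb{Z}\oplus\mathbb{Z}/6\mathbb{Z}$, $\mathbb{Z}/2\mathbb{Z}\oplus\mathbb{Z}/12\mathbb{Z}$ over $\mathbb{Q}(\sqrt{-3})$, $\mathbb{Z}/3\mathbb{Z}\oplus\mathbb{Z}/3\mathbb{Z}$ away from $\mathbb{Q}(\sqrt{-3})$, and $\mathbb{Z}/15\mathbb{Z}$; you argue directly: the remaining $3$-torsion satisfies $x^3=-4a$, $y^2=-3a$, so full $3$-torsion can only live in $\mathbb{Q}(\sqrt{-3})$ (and never occurs in case (i)); the primitive part $x^6+20x^3-8$ of the $4$-division polynomial of $y^2=x^3+1$ forces $x^3=-10\pm 6\sqrt{3}\in\mathbb{Q}(\sqrt{3})\setminus\mathbb{Q}$, so no point of order $4$ is defined over $\mathbb{Q}(\sqrt{-3})$, killing $\mathbb{Z}/2\mathbb{Z}\oplus\mathbb{Z}/12\mathbb{Z}$; and a supersingular reduction at a prime $p\equiv 2\pmod 3$, $p\not\equiv 4\pmod 5$, of good reduction kills $5$-torsion and hence $\mathbb{Z}/15\mathbb{Z}$. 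This buys a self-contained proof, and for $\mathbb{Z}/15\mathbb{Z}$ your argument is in fact more complete than the paper's stated justification, which only invokes non-occurrence over $\mathbb{Q}(\sqrt{-3})$ although the theorem needs $\mathbb{Z}/15\mathbb{Z}$ excluded over every quadratic field. One small repair to your last step: the residue field at a prime $\mathfrak{p}$ of $K$ above $p$ may be $\mathbb{F}_{p^2}$ when $p$ is inert in $K$, so you should add that $\#\widetilde{E}(\mathbb{F}_{p^2})=(p+1)^2$, which is still prime to $5$ under your congruence condition, rather than relying only on $\#\widetilde{E}(\mathbb{F}_p)=p+1$.
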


\begin{proof} Proof: \noindent\renewcommand{\theenumi}{\roman{enumi}} \begin{enumerate} \item Let $E(K)$: $y^2=x^3+b^6$, where $a=b^6$ and $b\in\lv{Z}$. Since 2-torsion occurs when $y=0$, we will look for the roots of $x^3+b^6$  (Lemma \ref{2tors}). These are
$$-b^{6/3}=-b^2$$
and
$$\frac{b^{6/3}\pm b^{6/3} \sqrt{-3}}{2}=\frac{b^2 \pm b^2 \sqrt{-3}}{2}.$$
There is exactly one rational root, $b^2$, so there is one point of order two over $\lv{Q}$ and at least one point of order two over every quadratic extension of $\lv{Q}$. There are two more points of order two over $\lv{Q}(\sqrt{-3})$.

Since there will be at least one rational point of order three, $\lv{Z}/3\lv{Z}\subseteq E(\lv{Q})_{tors}$ (Lemma \ref{3tors}). Further, since we also know that $\lv{Z}/2\lv{Z}\subseteq E(\lv{Q})_{tors}$, then $\lv{Z}/6\lv{Z}\subseteq E(\lv{Q})_{tors}$.

Let $K$ be any quadratic field. Then since $\lv{Z}/6\lv{Z}\subseteq E(\lv{Q})_{tors}$, $\lv{Z}/6\lv{Z}\subseteq E(K)_{tors}$. Using Kamienny, Kenku, and Momose's work (\cite{k}, \cite{km}) in Theorem \ref{kkm}, $E(K)_{tors}$ is one of the following:
$$ \lv{Z}/6\lv{Z}$$
$$\lv{Z}/12\lv{Z}$$
$$\lv{Z}/18\lv{Z}$$
$$\mathbb{Z} / 2\mathbb{Z} \oplus \mathbb{Z} / 6\mathbb{Z}$$
$$\mathbb{Z} / 2\mathbb{Z} \oplus \mathbb{Z} / 12\mathbb{Z}$$
$$\mathbb{Z} / 3\mathbb{Z} \oplus \mathbb{Z} / 6\mathbb{Z}$$

If $K\neq \lv{Q}(\sqrt{-3})$, there are no further points of order two. Also, $\lv{Z}/3 \lv{Z} \oplus \lv{Z}/ 6 \lv{Z}$ does not occur over any quadratic field other than $\lv{Q}(\sqrt{-3})$ \cite{fn}. Hence, this limits the above list to $ \lv{Z}/6\lv{Z}, \lv{Z}/12\lv{Z}$, or $\lv{Z}/18\lv{Z}$.

If $K=\lv{Q}(\sqrt{-3})$, there will be three non-trivial points of order two (Lemma \ref{2tors}). Due to Najman's work, it has already been shown that $E(\lv{Q}(\sqrt{-3}))_{tors}$ cannot be $\lv{Z} / 2\mathbb{Z} \oplus \mathbb{Z} / 12\mathbb{Z}$ \cite{fn}. This leaves one possibility for the torsion subgroup of $E(\lv{Q}(\sqrt{-3}))$: $\mathbb{Z} / 2\mathbb{Z} \oplus \mathbb{Z} / 6\mathbb{Z}$.

\item Now let $a=b^2$, where $b$ is not a cube. We already know that there will be one point of order three (Lemma \ref{3tors}). Hence, $\lv{Z}/3 \lv{Z} \subseteq E(\lv{Q})_{tors} \subseteq E(K)_{tors}$. This shortens the list of 26 possibilities in Theorem \ref{kkm} down to nine (\cite{k},\cite{km}):
$$ \lv{Z}/6\lv{Z}$$
$$\lv{Z}/9\lv{Z}$$
$$\lv{Z}/12\lv{Z}$$
$$ \lv{Z}/15\lv{Z}$$
$$\lv{Z}/18\lv{Z}$$
$$\mathbb{Z} / 2\mathbb{Z} \oplus \mathbb{Z} / 6\mathbb{Z}$$
$$\mathbb{Z} / 2\mathbb{Z} \oplus \mathbb{Z} / 12\mathbb{Z}$$
$$\mathbb{Z} / 3\mathbb{Z} \oplus \mathbb{Z} / 3\mathbb{Z}$$
$$\mathbb{Z} / 3\mathbb{Z} \oplus \mathbb{Z} / 6\mathbb{Z}$$

Next, we consider if and when 2-torsion will occur by finding the roots of $y^2=x^3+b^2$ (Lemma \ref{2tors}). The roots are:
$$-b^{2/3}$$
and 
$$\frac{b^{2/3} \pm b^{2/3} \sqrt{-3}}{2}.$$

Since $b$ is not a cube, these roots are not contained in any quadratic field, thus eliminating the possibility of 2-torsion points on $E(K)$. This now shortens the list for $E(K)_{tors}$ to:

$$\lv{Z}/3\lv{Z}$$
$$\lv{Z}/9\lv{Z}$$
$$ \lv{Z}/15\lv{Z}$$
$$\mathbb{Z} / 3\mathbb{Z} \oplus \mathbb{Z} / 3\mathbb{Z}$$

%We next consider the possibility of $\lv{Z}/15\lv{Z}$ torsion by comparing the curve $y^2=x^3+b^2$ with the parameterization given for a curve $E(K)$ with $\lv{Z}/5\lv{Z}\subseteq E(K)_{tors}$ in F.P. Rabarison's paper \cite{fpr}. Note that if a curve has $\lv{Z}/15\lv{Z}$ torsion, then $\lv{Z}/5\lv{Z}\subseteq E(K)_{tors}$. We first convert $y^2=x^3+b^2$ from Weierstrass form to Tate Normal form of Rabarison's paper:
%$$y^2+\frac{3}{\sqrt{2}}b^{\frac{1}{3}}xy+2\sqrt{2}by=x^3+\frac{15}{8}b^{\frac{2}{3}}x^2$$

%Below is the parameterization of \cite{fpr}:

%$$y^2 + (1 - t)xy - ty = x^3 - tx^2$$

%By comparing coefficients we to obtain a system of equations:

%$$1-t=\frac{3}{\sqrt{2}}b^{\frac{1}{3}}$$
%$$-t=2\sqrt{2}b$$
%$$-t=\frac{15}{8}b^{\frac{2}{3}}$$

%This system is inconsistent. Hence, $\lv{Z}/5\lv{Z}\not\subseteq E(K)_{tors}$, and further,  $E(K)_{tors}\ncong\lv{Z}/15\lv{Z}$.

Finally, we know that $\mathbb{Z} / 3\mathbb{Z} \oplus \mathbb{Z} / 3\mathbb{Z}$ torsion does not occur over any quadratic field other than $\lv{Q}(\sqrt{-3})$ (\cite{fn}), and $\lv{Z}/15\lv{Z}$ torsion never occurs over $\lv{Q}(\sqrt{-3})$ (\cite{fr}). 
\begin{flushright}
$\blacksquare$
\end{flushright}

\end{enumerate}

\end{proof}

Below are examples of curves of the form $y^2=x^3+a$.
$$E: y^2=x^3+1\text{ , }E(\lv{Q}(\sqrt{-3}))\cong\lv{Z}/2\lv{Z}\oplus\lv{Z}/6\lv{Z}$$
$$E: y^2=x^3+4^2\text{ , }E(\lv{Q}(\sqrt{-3}))\cong\lv{Z}/3\lv{Z}\oplus\lv{Z}/3\lv{Z}$$

\begin{corollary}
Let $E(K):y^2=x^3+1,$ where $K$ is any quadratic field. Then $E(K)_{tors}$ is isomorphic to $\lv{Z}/6\lv{Z}$, $\lv{Z}/18\lv{Z}$, or $\lv{Z}/2\lv{Z}\oplus\lv{Z}/6\lv{Z}$.
\end{corollary}
Proof:
From Theorem \ref{result}, we know that $E(K)_{tors}$ is isomorphic to $\lv{Z}/6\lv{Z}$, $\lv{Z}/12\lv{Z}$, $\lv{Z}/18\lv{Z}$, or $\lv{Z}/2\lv{Z}\oplus\lv{Z}/6\lv{Z}$. We will compare the curve $y^2=x^3+1$ with the parameterization given for a curve $E(K)$ with $\lv{Z}/12\lv{Z}\subseteq E(K)_{tors}$ in Rabarison's paper \cite{fpr}. First, we convert $y^2=x^3+1$ from Weierstrass form to Tate Normal form:
$$y^2+\frac{4}{3}xy+\frac{2}{9}y=x^3+\frac{2}{9}x^2$$

Below is the parameterization of \cite{fpr}:

$$y^2 + (6t^4-8t^3+2t^2+2t-1)xy +(-12t^6+30t^5-34t^4+21t^3-7t^2+t)(t-1)^5y =$$$$= x^3 +(-12t^6+30t^5-34t^4+21t^3-7t^2+t)(t-1)^2x^2$$

The parameterization is centered at a point of order 12. We transform the curve so it is centered at a point of order six, as $y^2+\frac{4}{3}xy+\frac{2}{9}y=x^3+\frac{2}{9}x^2$ is. Next, we compare coefficients:
$$-\frac{-10t^4+20t^3-16t^2+6t-1}{(3t^2-3t+1)^2}=\frac{4}{3}$$
$$\frac{t^2(t-1)^2(2t-1)^2(2t^2-2t+1)}{(3t^2-3t+1)^4}=\frac{2}{9}$$

This system is inconsistent. Hence, $\lv{Z}/12\lv{Z}\not\cong E(K)_{tors}$.

\begin{flushright}
$\blacksquare$
\end{flushright}

We believe that $E(K)_{tors}\not\cong\lv{Z}/18\lv{Z}$ for all $K$. However, we were unable to prove this. We attempted to compare the curve $y^2=x^3+1$ to parameterizations for a curve with $\lv{Z}/18\lv{Z}\subseteq E(K)_{tors}$. We obtained a system of equations that Mathematica was unable to solve. We checked the torsion structure of $E(\lv{Q}(\sqrt{d}))$ for $-9000\leq d\leq 3814$, and determined that $E(\lv{Q}(\sqrt{d}))\not\cong\lv{Z}/18\lv{Z}$ for such $d$.

%\begin{thebibliography}{GMN}%if the number of references grows to 10 or more, change 9 to 99
%
%\bibitem[K]{k}
%S. Kamienny, Torsion points on elliptic curves and q-coe?cients of modular forms, Invent. Math. 109 (1992), 221�229.
%
%\bibitem[KM]{km} M. A. Kenku and F. Momose, Torsion points on elliptic curves de?ned over quadratic ?elds, Nagoya Math. J. 109 (1988), 125�149.
%
%\bibitem[N]{fn}
% Najman, Filip. Complete classification of torsion of elliptic curves over quadratic cyclotomic fields. J. Number Theory 130 (2010), no. 9, 1964--1968
%
%\bibitem[ST]{st}
%J. H. Silverman, J. Tate. Rational Points on Elliptic Curves. (1992).
% 
%\bibitem[R]{fpr}
%F. P. Rabarison, Torsion et rang des courbes elliptiques de?nies sur les corps de nombres algebriques, Doctorat de Universite de Caen, 2008.
%
% \bibitem[RA]{fr}
%Rabarison, F. Patrick. Structure de torsion des courbes elliptiques sur les corps quadratiques. (French) [Torsion structure of elliptic curves over quadratic fields] Acta Arith. 144 (2010), no. 1, 17--52.
% 
%\end{thebibliography}

\bibliography{EllipticBib}
\bibliographystyle{plain}

% Set the ending of a LaTeX document
\end{document}